\newtheorem{theorem}{Theorem}
\newtheorem{example}[theorem]{Example}
\newtheorem{lemma}[theorem]{Lemma}
\newtheorem{proposition}[theorem]{Proposition}
\newtheorem{question}[theorem]{Question}
\begin{document}

\title[ON THE FAITHFULNESS OF A FAMILY OF REPRESENTATIONS OF $SM_n$]{ON THE FAITHFULNESS OF A FAMILY OF REPRESENTATIONS OF THE SINGULAR BRAID MONOID $SM_n$} 

\author{Mohamad N. Nasser}

\address{Mohamad N. Nasser\\
         Department of Mathematics and Computer Science\\
         Beirut Arab University\\
         P.O. Box 11-5020, Beirut, Lebanon}
         
\email{m.nasser@bau.edu.lb}

\maketitle

\begin{abstract}
For $n\geq 2$, let $G_n$ be a group and let $\rho: B_n\rightarrow G_n$ be a representation of the braid group $B_n$. For a field $\mathbb{K}$ and $a,b,c\in \mathbb{K}$, Bardakov, Chbili, and Kozlovskaya extend the representation $\rho$ to a family of representations $\Phi_{a,b,c}:SM_n \rightarrow \mathbb{K}[G_n]$ of the singular braid monoid $SM_n$, where $\mathbb{K}[G_n]$ is the group algebra of $G_n$ over $\mathbb{K}$. In this paper, we study the faithfulness of the family of representations $\Phi_{a,b,c}$ in some cases. First, we find necessary and sufficient conditions of the families $\Phi_{a,0,0}, \Phi_{0,b,0}$ and $\Phi_{0,0,c}$ for all $n\geq 2$ to be unfaithful, where $a,b,c \in \mathbb{K}^*$. Second, we consider the case $n=2$ and we find the nature of $\ker(\Phi_{a,b,c})$ if $\Phi_{a,b,c}$ is unfaithful. Moreover, we show that there exist some families $\Phi_{a,b,c}$ that have trivial kernel in the case $n=2$. Also, we find the shape of the possible elements in $\ker(\Phi_{a,b,c})$ for all $n\geq 3$ when the kernel of ${\Phi_{a,b,c}|}_{SM_2}$ is nontrivial.
\end{abstract}

\medskip

\renewcommand{\thefootnote}{}
\footnote{\textit{Key words and phrases.} Braid Group, Singular Braid Monoid, Group Representations, Faithfulness.}
\footnote{\textit{Mathematics Subject Classification.} Primary: 20F36.}

\vskip 0.1in

\section{Introduction} 

The braid group on $n$ strings, $B_n$, is the group with generators $\sigma_1,\sigma_2, \ldots,\sigma_{n-1}$ and a presentation as follows:
\begin{align*}
&\sigma_i\sigma_{i+1}\sigma_i = \sigma_{i+1}\sigma_i\sigma_{i+1} ,\hspace{0.65cm} i=1,2,\ldots,n-2,\\
&\sigma_i\sigma_j = \sigma_j\sigma_i , \hspace{2.25cm} |i-j|\geq 2.
\end{align*}

\vspace{0.2cm}

The monoid of singular braid, $SM_n$, is generated by the standard generators $\sigma_1^{\pm 1},\sigma_2^{\pm 1}, \ldots,\sigma_{n-1}^{\pm 1}$ of $B_n$ in addition to the singular generators $\tau_1,\tau_2, \ldots, \tau_{n-1}$. The Figures of both generators are shown in Figure 1 and Figure 2 below. $SM_n$ is introduced by J. Baez in \cite{1} and J. Birman in \cite{6}. The references \cite{8}, \cite{9}, and \cite{10} mentioned here for more information on $SM_n$.

\vspace{0.2cm}

The question of faithfulness of representations of groups and monoids has been always of a lot of significance to people working on representation theory. The most famous linear representations of $B_n$ are Burau representation \cite{7} and Lawrence-Krammer-Bigelow representation \cite{11}. Burau representation has been proved to be faithful for $n\leq 3$ \cite{5} and unfaithful for $n\geq 5$ \cite{3}; whereas the case $n=4$ remains open. Lawrence-Krammer-Bigelow representation of $B_n$ has been proved to be faithful for all $n$ \cite{4}; which shows that the group $B_n$ is linear. This leads to a natural question regarding the linearity of the singular braid monoid $SM_n$. In \cite{9}, Dasbach and Gemein constructed a linear representation of $SM_3$, which is an extension of the Burau representation of $B_3$. Moreover, they investigated extensions of the Artin representation $B_n \rightarrow
Aut(F_n)$ and the Burau representation $B_n \rightarrow GL_n(\mathbb{Z}[t^{\pm1}])$ to $SM_n$ and found connections between these representations.

\vspace{0.2cm}

In \cite{2}, Bardakov, Chbili, and Kozlovskaya introduced a family of representations of the singular braid monoid $SM_n$. More exactly, they proved that if $\rho:B_n\rightarrow G_n$ is a representation of the braid group $B_n$ to a group $G_n$ and $\mathbb{K}$ is a field, then the map $\Phi_{a,b,c}:SM_n\rightarrow \mathbb{K}[G_n]$ defined by:
\begin{align*}
&\Phi_{a,b,c}(\sigma_i^{\pm 1})=\rho(\sigma_i^{\pm 1}) \hspace{0.2cm} \text{and} \hspace{0.2cm} \Phi_{a,b,c}(\tau_i)=a\rho(\sigma_i)+b\rho(\sigma_i^{-1})+ce,\hspace{0.2cm} i=1,2,\ldots,n-1,
\end{align*}
where $a,b,c \in \mathbb{K}$ and $e$ is the neutral element of $G_n$, is a representation of $SM_n$, which is an extension of $\rho$.

\vspace{0.2cm}

One of these family of representations is the Birman representation in case $\Phi=id, a=1,b=-1$ and $c=0$. For many years, there was a conjecture that Birman representation is faithful, which is shown to be faithful by Paris in \cite{12}. 

\vspace{0.2cm}

Bardakov, Chbili, and Kozlovskaya asked the following question: 
\begin{center}
For what values of $a,b,c \in \mathbb{C}$ the representation $\Phi_{a,b,c}$ is unfaithful? \cite{2}
\end{center}
The main goal of this paper is to study the faithfulness of some family of representations $\Phi_{a,b,c}$.

\vspace{0.2cm}

In section 2 of our work, we introduce main definitions and generalities of the braid group $B_n$, the singular braid monoid $SM_n$ and the geometrical interpretation of their generators.

\vspace{0.2cm}

In section 3, we study first the faithfulness of the representations: $\Phi_{0,0,0}$, $\Phi_{a,0,0}$, $\Phi_{0,b,0}$ and $\Phi_{0,0,c}$, where $a,b,c \in \mathbb{K}^*$. First, we prove that $\Phi_{0,0,0}$ is unfaithful (Proposition \ref{propp}). Second, we find necessary and sufficient conditions of the families $\Phi_{a,0,0}$, $\Phi_{0,b,0}$ and $\Phi_{0,0,c}$ for all $n\geq 2$ to be unfaithful (Theorem \ref{Thm4}). Then, we consider the case $n=2$ and find the nature of $\ker(\Phi_{a,b,c})$ if $\Phi_{a,b,c}$ is unfaithful. More precisely, we prove that, for $n=2$, if $\Phi_{a,b,c}$ is unfaithful and has a nontrivial kernel, then $\ker(\Phi_{a,b,c})=<\tau_1^p\sigma_1^q>$ for some $p \in \mathbb{N}^*$ and $q\in \mathbb{Z}$, where $p$ is minimal positive integer (Theorem \ref{Thm5}). Moreover, we show that the kernel of some families $\Phi_{a,b,c}$ in this case is trivial (Theorem \ref{Thm6}). Then, in Theorem \ref{Thm10}, we find the shape of the possible elements in $\ker(\Phi_{a,b,c})$ for all $n\geq 2$ in the case ${\Phi_{a,b,c}|}_{SM_2}$ has nontrivial kernel. Indeed, we prove that, If ${\Phi_{a,b,c}|}_{SM_2}$ has nontrivial kernel, then the possible elements in $\ker(\Phi_{a,b,c})$ are of the form: $w=\tau_1^{r_1}v^{m_1}u_1\tau_1^{r_2}v^{m_2}u_2\ldots \tau_1^{r_k}v^{m_k}u_k$, where $v=\tau_1^p\sigma_1^q \in \ker({\Phi_{a,b,c}}_{|_{SM_2}})$ with $p$ minimal, $u_i \in B_n$, $m_i \geq 0$ and $0\leq r_i< p$, for all $1\leq i \leq k$.

\medskip

\section{Generalities} 

Recall that the braid group, $B_n$, has generators $\sigma_1,\sigma_2,\ldots,\sigma_{n-1}$ that satisfy the following relations:
\begin{align*}
&\sigma_i\sigma_{i+1}\sigma_i = \sigma_{i+1}\sigma_i\sigma_{i+1} ,\hspace{0.5cm} i=1,2,\ldots,n-2,\\
&\sigma_i\sigma_j = \sigma_j\sigma_i , \hspace{2.1cm} |i-j|>2.
\end{align*}

The singular braid monoid, introduced in \cite{1} and \cite{6}, is generated by the generators $\sigma_1^{\pm 1},\sigma_2^{\pm 1}, \ldots,\sigma_{n-1}^{\pm 1}$ of $B_n$ in addition to the singular generators $\tau_1,\tau_2, \ldots, \tau_{n-1}$. The generators $\sigma_i, \sigma_i^{-1}$ and $\tau_i$ of $SM_n$ satisfy the following relations:

\begin{align*}
(1) \hspace{1cm} \sigma_i\sigma_{i+1}\sigma_i = \sigma_{i+1}\sigma_i\sigma_{i+1} ,\hspace{.3cm} &\text{for} \hspace{.3cm} i=1,2,\ldots ,n-2,\vspace{0.1cm}\\ 
(2) \hspace{2.65cm} \sigma_i\sigma_j = \sigma_j\sigma_i , \hspace{.3cm} &\text{for} \hspace{.3cm} |i-j|\geq 2, \vspace{0.1cm}\\
(3) \hspace{2.85cm} \tau_i\tau_j = \tau_j\tau_i , \hspace{.3cm} &\text{for} \hspace{.3cm} |i-j|\geq 2, \vspace{0.1cm}\\
(4) \hspace{2.75cm} \tau_i\sigma_j=\sigma_j\tau_i, \hspace{.3cm} &\text{for} \hspace{.3cm} |i-j|\geq 2,  \vspace{0.1cm}\\
(5) \hspace{2.8cm} \tau_i\sigma_i=\sigma_i\tau_i, \hspace{.3cm} &\text{for} \hspace{.3cm} i=1,2,\ldots ,n-1,  \vspace{0.1cm}\\
(6) \hspace{1.1cm} \sigma_i\sigma_{i+1}\tau_i=\tau_{i+1}\sigma_i\sigma_{i+1}, \hspace{.3cm} &\text{for} \hspace{.3cm} i=1,2,\ldots ,n-2,  \vspace{0.1cm}\\
(7) \hspace{1.1cm} \sigma_{i+1}\sigma_{i}\tau_{i+1}=\tau_{i}\sigma_{i+1}\sigma_{i}, \hspace{.3cm} &\text{for} \hspace{.3cm} i=1,2,\ldots ,n-2.\\
\end{align*}

The geometric interpretation of the generators $\sigma_i, \sigma_i^{-1}$ and $\tau_i$ of $SM_n$ are presented in the following figures.

\begin{figure}[h]
\centering
\includegraphics[scale=0.8]{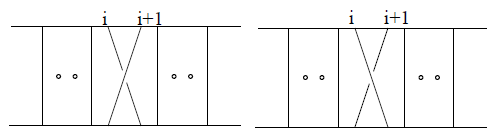}
\caption{The braid generators $\sigma_i$ and $\sigma_i^{-1}$}
\end{figure}

\begin{figure}[h]
\centering
\includegraphics[scale=0.8]{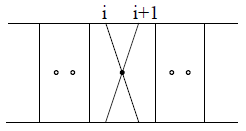}
\caption{The singular generators $\tau_i$}
\end{figure}

It is well known that the braid group, $B_n$, is generated by the two elements $\sigma_1$ and $x=\sigma_1\sigma_2\ldots \sigma_{n-1}$ for all $n\geq 2$. Also, using the relations (6) and (7) above, we can see that for all $2\leq i \leq n-1$, $\tau_i$ can be written as a product of words consist of $\tau_1$ and $\sigma_j^{\pm1}$ for some $1\leq j \leq n-1$. So, the singular braid monoid, $SM_n$, is generated by the elements $\sigma_1^{\pm 1}, x^{\pm 1}$ and $\tau_1$ for all $n\geq 2$. That is, $SM_n=<\sigma_1^{\pm 1}, x^{\pm 1},\tau_1>$ for all $n\geq 2$. In what follows, we deal with this set of generators for $SM_n$.

\medskip

\section{The Faithfulness of a Family of Representations $\Phi_{a,b,c}$ of $SM_n$} 

Consider $n\geq 2$ and let $\rho: B_n \rightarrow G_n$ be a representation of the braid group $B_n$ to a group $G_n$. Bardakov, Chbili, and Kozlovskaya extend this representation to a family of representations of the singular braid monoid $SM_n$ to a group algebra $\mathbb{K}[G_n]$ over a field $\mathbb{K}$. The following proposition is given by them in \cite{2}.

\vspace{0.2cm}

\begin{proposition} \cite{2}
Let $\rho: B_n \rightarrow G_n$ be a representation of the braid group $B_n$ to a group $G_n$ and let $\mathbb{K}$ be a field with $a,b,c \in \mathbb{K}$. Then, the map $\Phi_{a,b,c}:SM_n\rightarrow \mathbb{K}[G_n]$ which acts on the generators of $SM_n$ by the rules
\begin{align*}
&\Phi_{a,b,c}(\sigma_i^{\pm 1})=\rho(\sigma_i^{\pm 1}) \hspace{0.2cm} \text{and} \hspace{0.2cm} \Phi_{a,b,c}(\tau_i)=a\rho(\sigma_i)+b\rho(\sigma_i^{-1})+ce,\hspace{0.2cm} i=1,2,\ldots,n-1,
\end{align*}
defines a representation of $SM_n$ to $\mathbb{K}[G_n]$. Here $e$ is a neutral element of $G_n$.
\end{proposition}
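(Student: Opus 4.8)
The plan is to verify directly that the assignment respects all the defining relations of $SM_n$, by checking that the images of the generators under $\Phi_{a,b,c}$ satisfy relations (1)--(7). Since $\Phi_{a,b,c}$ agrees with $\rho$ on the $\sigma_i^{\pm 1}$, relations (1) and (2) hold automatically because $\rho$ is a homomorphism of $B_n$. The work is entirely in the four families of relations involving the $\tau_i$. Throughout, write $s_i=\rho(\sigma_i)$, so that $\Phi_{a,b,c}(\tau_i)=a s_i+b s_i^{-1}+c e$, an element of the group algebra $\mathbb{K}[G_n]$, and use that $\mathbb{K}[G_n]$ is an associative $\mathbb{K}$-algebra in which the $s_i$ obey the braid relations.

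The easy cases first. For relation (3), $\tau_i\tau_j=\tau_j\tau_i$ when $|i-j|\ge 2$: here $s_i$ and $s_j$ commute in $G_n$ by relation (2), hence the linear combinations $a s_i+b s_i^{-1}+c e$ and $a s_j+b s_j^{-1}+c e$ commute in $\mathbb{K}[G_n]$, expanding the product bilinearly. Relation (4), $\tau_i\sigma_j=\sigma_j\tau_i$ for $|i-j|\ge 2$, is the same observation: $s_j$ commutes with each of $s_i$, $s_i^{-1}$, $e$, so it commutes with their $\mathbb{K}$-linear span. Relation (5), $\tau_i\sigma_i=\sigma_i\tau_i$, is immediate because $s_i$ commutes with $s_i$, $s_i^{-1}$ and $e$. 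In each of these, the point is just that a group element commuting with a set of group elements commutes with every $\mathbb{K}$-linear combination of them — a one-line remark that I would state once and reuse.

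The substantive cases are the mixed braid-type relations (6) and (7). For (6) we must show $\Phi_{a,b,c}(\sigma_i)\Phi_{a,b,c}(\sigma_{i+1})\Phi_{a,b,c}(\tau_i)=\Phi_{a,b,c}(\tau_{i+1})\Phi_{a,b,c}(\sigma_i)\Phi_{a,b,c}(\sigma_{i+1})$, i.e.
\begin{equation*}
s_i s_{i+1}\bigl(a s_i+b s_i^{-1}+c e\bigr)=\bigl(a s_{i+1}+b s_{i+1}^{-1}+c e\bigr) s_i s_{i+1}.
\end{equation*}
Expanding both sides by $\mathbb{K}$-linearity reduces this to the three group-element identities $s_i s_{i+1} s_i = s_{i+1} s_i s_{i+1}$, $s_i s_{i+1} s_i^{-1} = s_{i+1}^{-1} s_i s_{i+1}$, and $s_i s_{i+1} = s_i s_{i+1}$ (matching the $a$-, $b$-, and $c$-terms respectively). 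The first is the braid relation; the third is trivial; the middle one follows from the braid relation by rewriting $s_i s_{i+1} s_i = s_{i+1} s_i s_{i+1}$ as $s_{i+1}^{-1} s_i s_{i+1} = s_i s_{i+1} s_i^{-1}$. Relation (7) is handled by the symmetric computation, using the braid relation in the form $s_{i+1} s_i s_{i+1}^{-1} = s_i^{-1} s_{i+1} s_i$. I expect the only mild obstacle to be bookkeeping: making sure that after expanding the bilinear products the $a$-, $b$-, $c$-labelled terms are matched correctly and that relation (6) really does need only the plain braid relation (1) of $B_n$ and not any singular relation — which it does, since $\rho$ already knows relations (1) and (2). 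Once all seven relations are checked, the universal property of the monoid presentation of $SM_n$ gives that $\Phi_{a,b,c}$ is a well-defined monoid homomorphism into the multiplicative monoid of $\mathbb{K}[G_n]$, and it is an extension of $\rho$ by construction since it restricts to $\rho$ on the $\sigma_i^{\pm1}$.
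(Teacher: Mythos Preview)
Your proof is correct: the direct verification that the images of the generators satisfy relations (1)--(7), reducing (6) and (7) term-by-term to consequences of the braid relation $s_is_{i+1}s_i=s_{i+1}s_is_{i+1}$, is exactly what is needed, and your derivation of the $b$-term identities $s_is_{i+1}s_i^{-1}=s_{i+1}^{-1}s_is_{i+1}$ and $s_{i+1}s_is_{i+1}^{-1}=s_i^{-1}s_{i+1}s_i$ from the braid relation is right.

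Note, however, that the paper does not supply its own proof of this proposition: it is quoted from \cite{2} (Bardakov, Chbili, and Kozlovskaya) and stated without argument, so there is nothing in the present paper to compare your proof against. Your verification is the standard and expected argument for a result of this kind, and almost certainly coincides with what appears in \cite{2}.
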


\vspace{0.2cm}

Bardakov, Chbili, and Kozlovskaya asked the following question: For what values of $a,b,c \in \mathbb{C}$ the representation $\Phi_{a,b,c}$ is unfaithful? \cite{2}. In what follows, we answer this question in some cases.

\vspace{0.2cm}

First of all, we consider the case $\rho$ is unfaithful.

\vspace{0.2cm}

\begin{proposition}
Let $G_n$ be a group and let $\rho: B_n \rightarrow G_n$ be a representation. Let $\mathbb{K}$ be a field with $a,b,c \in \mathbb{K}$ and let $\Phi_{a,b,c}: SM_n\rightarrow \mathbb{K}[G_n]$ be a representation of $SM_n$  defined by:
\begin{align*}
&\Phi_{a,b,c}(\sigma_i^{\pm 1})=\rho(\sigma_i^{\pm 1}) \hspace{0.2cm} \text{and} \hspace{0.2cm} \Phi_{a,b,c}(\tau_i)=a\rho(\sigma_i)+b\rho(\sigma_i^{-1})+ce,\hspace{0.2cm} i=1,2,\ldots,n-1.
\end{align*}
If $\rho$ is unfaithful, then $\Phi_{a,b,c}$ is unfaithful for all $a,b,c \in \mathbb{K}$. 
\end{proposition}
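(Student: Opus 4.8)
The plan is to lift a nontrivial element of $\ker(\rho) \subseteq B_n$ to an element of $\ker(\Phi_{a,b,c}) \subseteq SM_n$ that is still nontrivial in $SM_n$. First I would pick $w \in B_n \setminus \{e\}$ with $\rho(w) = e$; such a $w$ exists because $\rho$ is unfaithful. Viewing $w$ as a word in the $\sigma_i^{\pm 1}$, it defines an element of $SM_n$, which I will also call $w$. Since $\Phi_{a,b,c}$ restricted to the subgroup generated by the $\sigma_i^{\pm 1}$ agrees with $\rho$ (by the defining formula $\Phi_{a,b,c}(\sigma_i^{\pm 1}) = \rho(\sigma_i^{\pm 1})$), we get $\Phi_{a,b,c}(w) = \rho(w) = e$, so $w \in \ker(\Phi_{a,b,c})$ and the values $a,b,c$ play no role here.

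The remaining point — and the only real content — is to check that $w \neq e$ as an element of $SM_n$, i.e. that the natural map $B_n \to SM_n$ sending $\sigma_i \mapsto \sigma_i$ is injective. This is a standard fact: $SM_n$ embeds $B_n$ as a submonoid (indeed as its group of units), which follows for instance from the existence of a retraction $SM_n \to B_n$ obtained by sending each $\tau_i$ to $\sigma_i$ (or to any element making the relations (1)–(7) hold, e.g. $\tau_i \mapsto \sigma_i$, which is easily verified against relations (3)–(7)); composing $B_n \hookrightarrow SM_n$ with this retraction gives the identity on $B_n$, forcing injectivity. I would state this as a known property of $SM_n$, citing \cite{8} or \cite{9}, rather than reproving it. Given this, $w$ is a nontrivial element of $SM_n$ lying in $\ker(\Phi_{a,b,c})$, so $\Phi_{a,b,c}$ is unfaithful for every choice of $a,b,c \in \mathbb{K}$.

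I do not expect any serious obstacle: the argument is essentially the observation that $\Phi_{a,b,c}$ extends $\rho$ and that $B_n$ sits inside $SM_n$, so unfaithfulness is inherited automatically. The only thing to be careful about is not to conflate "nontrivial in $B_n$" with "nontrivial in $SM_n$"; invoking the embedding $B_n \hookrightarrow SM_n$ handles this cleanly.
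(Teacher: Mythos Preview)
Your argument is correct and is essentially the paper's own proof, which is the single line ``trivial, since $\Phi_{a,b,c}$ is an extension of $\rho$''; you have simply unpacked what ``extension'' means by making explicit the embedding $B_n \hookrightarrow SM_n$ and the fact that $\Phi_{a,b,c}$ restricted to $B_n$ equals $\rho$. The retraction argument for injectivity of $B_n \to SM_n$ is a nice touch and more than the paper bothers to say.
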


\begin{proof}
The proof is trivial, since $\Phi_{a,b,c}$ is an extension of $\rho$.
\end{proof}

\vspace{0.2cm}

In what follows, we consider the case $\rho$ is faithful.

\vspace{0.2cm}

\begin{proposition} \label{propp}
Let $G_n$ be a group and let $\rho: B_n \rightarrow G_n$ be a faithful representation. Let $\mathbb{K}$ be a field with $a,b,c \in \mathbb{K}$ and let $\Phi_{a,b,c}: SM_n\rightarrow \mathbb{K}[G_n]$ be a representation of $SM_n$  defined by:
\begin{align*}
&\Phi_{a,b,c}(\sigma_i^{\pm 1})=\rho(\sigma_i^{\pm 1}) \hspace{0.2cm} \text{and} \hspace{0.2cm} \Phi_{a,b,c}(\tau_i)=a\rho(\sigma_i)+b\rho(\sigma_i^{-1})+ce,\hspace{0.2cm} i=1,2,\ldots,n-1.
\end{align*}
Then, $\Phi_{0,0,0}$ is unfaithful. 
\end{proposition}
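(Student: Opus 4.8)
The plan is to show that $\Phi_{0,0,0}$ collapses all the singular generators to zero, hence cannot be injective. When $a=b=c=0$, the defining rule gives $\Phi_{0,0,0}(\tau_i)=0\cdot\rho(\sigma_i)+0\cdot\rho(\sigma_i^{-1})+0\cdot e=0$ in $\mathbb{K}[G_n]$ for every $i=1,\dots,n-1$. In particular $\Phi_{0,0,0}(\tau_1)=0=\Phi_{0,0,0}(\tau_1^2)$, since the zero element of the group algebra squares to zero. So it suffices to argue that $\tau_1$ and $\tau_1^2$ are distinct elements of $SM_n$; then these two elements have the same image, and $\Phi_{0,0,0}$ is not faithful.

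First I would isolate the one genuine point: $\tau_1\neq\tau_1^2$ in $SM_n$. The cleanest way is to invoke the well-known exponent-sum (or "number of singular points") homomorphism. Sending each $\sigma_i^{\pm1}\mapsto 0$ and each $\tau_i\mapsto 1$ in $(\mathbb{Z},+)$ is compatible with all the defining relations (1)--(7) of $SM_n$, since every relation has the same number of $\tau$-letters on each side; this defines a monoid homomorphism $s:SM_n\to\mathbb{Z}$ with $s(\tau_1)=1$ and $s(\tau_1^2)=2$. Hence $\tau_1\neq\tau_1^2$. (Alternatively one can cite that $SM_n$ embeds in the singular braid group, or use the geometric count of singular points, but the algebraic homomorphism is the quickest self-contained justification.)

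Finally I would assemble the argument: $\Phi_{0,0,0}(\tau_1)=0=\Phi_{0,0,0}(\tau_1^2)$ while $\tau_1\neq\tau_1^2$, so $\ker(\Phi_{0,0,0})$ is nontrivial and $\Phi_{0,0,0}$ is unfaithful, regardless of whether $\rho$ is faithful. There is essentially no obstacle here; the only thing that needs care is not to take for granted that $SM_n$ has no hidden collapse making $\tau_1=\tau_1^2$, which is exactly what the homomorphism $s$ rules out. Note that faithfulness of $\rho$ plays no role in this particular statement — it is used only to make the proposition a meaningful companion to the $a,b,c\in\mathbb{K}^*$ cases treated afterward.
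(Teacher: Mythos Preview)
Your argument is correct and follows essentially the same idea as the paper: exhibit two distinct elements of $SM_n$ that both get sent to $0$ under $\Phi_{0,0,0}$. The paper uses the pair $\tau_1$ and $\tau_1\sigma_1$ (asserting $\tau_1\sigma_1\neq\tau_1$ without further comment), whereas you use $\tau_1$ and $\tau_1^2$ and give a clean justification via the singular-exponent homomorphism; your version is arguably more self-contained on that point.

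One small correction: from $\Phi_{0,0,0}(\tau_1)=\Phi_{0,0,0}(\tau_1^2)=0$ you may conclude unfaithfulness (non-injectivity), but not that $\ker(\Phi_{0,0,0})$ is nontrivial, since in this paper the kernel is the preimage of the neutral element $e\in\mathbb{K}[G_n]$, not of $0$. Indeed the paper later emphasizes (Question~6) that an unfaithful monoid representation can have trivial kernel. Simply drop the clause about the kernel and keep the conclusion that $\Phi_{0,0,0}$ is unfaithful.
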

\begin{proof}
We have $\Phi_{0,0,0}(\tau_1\sigma_1)=\Phi_{0,0,0}(\tau_1)\Phi_{0,0,0}(\sigma_1)=0=\Phi_{0,0,0}(\tau_1)$ with $\tau_1\sigma_1\neq \tau_1$. Thus, $\Phi_{0,0,0}$ is unfaithful.
\end{proof}

\vspace{0.2cm}

In the next Theorem, we study the faithfulness of the families $\Phi_{a,0,0}$, $\Phi_{0,b,0}$, and $\Phi_{0,0,c}$ for any $a,b,c \in \mathbb{K}^*$.

\vspace{0.2cm}

\begin{theorem} \label{Thm4}
Let $G_n$ be a group and let $\rho: B_n \rightarrow G_n$ be a faithful representation. Let $\mathbb{K}$ be a field with $a,b,c \in \mathbb{K}$ and let $\Phi_{a,b,c}: SM_n\rightarrow \mathbb{K}[G_n]$ be a representation of $SM_n$  defined by:
\begin{align*}
&\Phi_{a,b,c}(\sigma_i^{\pm 1})=\rho(\sigma_i^{\pm 1}) \hspace{0.2cm} \text{and} \hspace{0.2cm} \Phi_{a,b,c}(\tau_i)=a\rho(\sigma_i)+b\rho(\sigma_i^{-1})+ce,\hspace{0.2cm} i=1,2,\ldots,n-1.
\end{align*}
The following holds true.
\begin{itemize}
\item[(a1)] If there exists $r\in \mathbb{Z}^*$ such that $a^r=1$, then $\Phi_{a,0,0}$ is unfaithful. \\
\item[(a2)] If $a^r\neq 1$ for all $r\in \mathbb{Z}^*$, then $\Phi_{a,0,0}$ is unfaithful if and only if there exists $v\in B_n$ such that $\rho(v)=a^{-s}e$ for some $s\in \mathbb{Z}^*$.\\
\item[(b1)] If there exists $r\in \mathbb{Z}^*$ such that $b^r=1$, then $\Phi_{0,b,0}$ is unfaithful.\\ 
\item[(b2)] If $b^r\neq 1$ for all $r\in \mathbb{Z}^*$, then $\Phi_{0,b,0}$ is unfaithful if and only if there exists $v\in B_n$ such that $\rho(v)=b^{-s}e$ for some $s\in \mathbb{Z}^*$.\\
\item[(c1)] If there exists $r\in \mathbb{Z}^*$ such that $c^r=1$, then $\Phi_{0,0,c}$ is unfaithful.\\ 
\item[(c2)] If $c^r\neq 1$ for all $r\in \mathbb{Z}^*$, then $\Phi_{0,0,c}$ is unfaithful if and only if there exists $v\in B_n$ such that $\rho(v)=c^{-s}e$ for some $s\in \mathbb{Z}^*$. 
\end{itemize}
\end{theorem}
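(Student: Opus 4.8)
The plan is to treat the three families $\Phi_{a,0,0}$, $\Phi_{0,b,0}$, $\Phi_{0,0,c}$ uniformly, since in each case $\Phi(\tau_i)$ is a scalar multiple of a single group element: $\Phi_{a,0,0}(\tau_i) = a\rho(\sigma_i)$, $\Phi_{0,b,0}(\tau_i) = b\rho(\sigma_i^{-1})$, and $\Phi_{0,0,c}(\tau_i) = c\,e$. So in fact it suffices to prove one general lemma: if $\Phi$ is a representation of $SM_n$ extending a faithful $\rho:B_n\to G_n$ such that $\Phi(\tau_i) = \lambda\,\rho(g_i)$ for a fixed nonzero scalar $\lambda\in\mathbb{K}^*$ and elements $g_i\in B_n$ (with $g_i=\sigma_i$, $\sigma_i^{-1}$, or $1$ respectively), then the faithfulness analysis depends only on whether $\lambda$ is a root of unity and whether $\lambda^{-s}e$ lies in the image of $\rho$. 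I would state and prove this once, then deduce (a1)--(c2) as instances.

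For the ``if'' directions: in case (a1)/(b1)/(c1), suppose $\lambda^r = 1$ with $r\in\mathbb{Z}^*$; WLOG $r\geq 1$. Then $\Phi(\tau_i^r) = \lambda^r \rho(g_i^r) = \rho(g_i^r) = \Phi(g_i^r)$. Since $\tau_i^r \neq g_i^r$ in $SM_n$ (the left side involves singular generators and the right side is a braid — one can see these are distinct, e.g. by the homomorphism $SM_n\to \mathbb{Z}$ counting the exponent sum where $\tau_i\mapsto$ something tracking singularity, or simply because $SM_n$ is known to embed in a way that separates braids from genuinely singular elements), the representation is unfaithful. For (a2)/(b2)/(c2), the ``if'' direction: if $\rho(v) = \lambda^{-s}e$ for some $v\in B_n$ and $s\in\mathbb{Z}^*$, WLOG $s\geq 1$, then $\Phi(\tau_i^s) = \lambda^s\rho(g_i^s)$ while $\Phi(v g_i^s) = \rho(v)\rho(g_i^s) = \lambda^{-s}\rho(g_i^s)$... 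I would instead compare $\Phi(\tau_1^s v)$ with $\Phi(\tau_1^s)\rho(g_1^{-s})$ appropriately — the cleanest choice is to note $\Phi(\tau_1^s) = \lambda^s\rho(g_1^s)$ and $\Phi(g_1^s v) = \rho(g_1^s)\rho(v) = \lambda^{-s}\rho(g_1^s)$, so $\Phi(\lambda^{s}\cdot)$... since $\lambda^s\neq\lambda^{-s}$ (as $\lambda^{2s}\neq 1$), this doesn't immediately work; the right comparison is $\Phi(\tau_1^{2s}) = \lambda^{2s}\rho(g_1^{2s})$ against $\Phi(g_1^{2s}v^{-1}\cdots)$ — I will need to arrange $\tau_1^{2s} w = $ (braid) for a suitable braid word $w$ with $\rho(w) = \lambda^{2s}e$, which exists since $\rho(v^{-1}) = \lambda^s e$ gives $\rho(v^{-2}) = \lambda^{2s}e$. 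So $\Phi(\tau_1^{2s} v^{2}) = \lambda^{2s}\rho(g_1^{2s})\cdot\lambda^{-2s}e = \rho(g_1^{2s}) = \Phi(g_1^{2s})$ with $\tau_1^{2s}v^2 \neq g_1^{2s}$ in $SM_n$.

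For the ``only if'' direction of (a2)/(b2)/(c2) — this is the main obstacle. Assume $\lambda$ is not a root of unity and $\Phi$ is unfaithful; I must produce $v\in B_n$ with $\rho(v) = \lambda^{-s}e$. The idea: take a nontrivial element $w\in\ker\Phi\subseteq SM_n$. Using the normal form / rewriting available in $SM_n$ (push all $\tau$'s together, or use that every element of $SM_n$ can be written with a controlled structure relative to the braid subgroup), write $\Phi(w)$ as a $\mathbb{K}$-linear combination of elements of $G_n$. Because $\Phi(\tau_i) = \lambda\rho(g_i)$ is a \emph{single} group element up to scalar, $\Phi$ of any word $w$ in the generators collapses to $\lambda^{N}\rho(\beta)$ where $N$ is the total $\tau$-degree of $w$ and $\beta\in B_n$ is the braid obtained by replacing each $\tau_i$ by $g_i$. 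Hence $\Phi(w) = \lambda^{N}\rho(\beta_w)$ for a well-defined braid $\beta_w$ and integer $N = N(w)\geq 0$. If $\Phi(w_1) = \Phi(w_2)$ with $w_1\neq w_2$ in $SM_n$, then $\lambda^{N(w_1)}\rho(\beta_{w_1}) = \lambda^{N(w_2)}\rho(\beta_{w_2})$, so $\rho(\beta_{w_1}\beta_{w_2}^{-1}) = \lambda^{N(w_2)-N(w_1)}e$. Since $\rho$ is faithful and $w_1\neq w_2$, I must argue $N(w_1)\neq N(w_2)$ (otherwise $\beta_{w_1} = \beta_{w_2}$ in $B_n$, and then a separation argument between the $\tau$-collapsed image and the original should force $w_1 = w_2$ in $SM_n$ — this requires knowing $SM_n$ is essentially a ``decorated'' version of $B_n$ where the $\tau$-collapsing map is injective on fibers of fixed $\tau$-degree, which should follow from the results of Baez/Birman or a direct normal-form argument). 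Then setting $s = N(w_2) - N(w_1)\neq 0$ and $v = \beta_{w_1}\beta_{w_2}^{-1}\in B_n$ gives $\rho(v) = \lambda^{-s}e$, as required. The delicate point I expect to have to handle carefully is justifying that the ``$\tau$-collapse'' map $SM_n\to B_n$ together with $\tau$-degree jointly distinguishes elements of $SM_n$ — i.e. ruling out the case $N(w_1)=N(w_2)$, $\beta_{w_1}=\beta_{w_2}$ but $w_1\neq w_2$ — and I would invoke the known structure of $SM_n$ (or the desingularization map $SM_n\to B_n$ combined with the conjectured/known monoid embedding properties) to close this gap.
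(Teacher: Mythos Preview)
Your overall strategy coincides with the paper's: exploit that $\Phi(\tau_i)$ is a nonzero scalar times a single group element, write $\Phi(w)=\lambda^{N(w)}\rho(\beta_w)$ with $N(w)$ the $\tau$-degree and $\beta_w\in B_n$ the word obtained by replacing each $\tau_i$ by $g_i$, and then compare. For the sufficient direction of (a2) the paper uses the cleaner witness $\Phi_{a,0,0}(\tau_i^{s}v)=a^{s}\rho(\sigma_i^{s})\cdot a^{-s}e=\rho(\sigma_i^{s})=\Phi_{a,0,0}(\sigma_i^{s})$ (taking $s>0$ without loss of generality), which avoids your detour through $\tau_1^{2s}v^{2}$.

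The step you flag as ``delicate'' is handled in the paper by the bare assertion that $s_1=s_2$ and $v_1=v_2$ force $w_1=w_2$; that is, the pair $(\tau\text{-degree},\ \beta_w)$ separates elements of $SM_n$. Your plan to close this by invoking ``known structure of $SM_n$'' would actually fail for $n\geq 3$: take $w_1=\tau_1\sigma_2$ and $w_2=\sigma_1\tau_2$ in $SM_3$. Both have $\tau$-degree $1$ and collapse to $\sigma_1\sigma_2$, yet $w_1\neq w_2$ (for instance, the faithful Birman map $\tau_i\mapsto\sigma_i-\sigma_i^{-1}$ sends them to $\sigma_1\sigma_2-\sigma_1^{-1}\sigma_2$ and $\sigma_1\sigma_2-\sigma_1\sigma_2^{-1}$, which differ since $\sigma_1^{2}\neq\sigma_2^{2}$ in $B_3$). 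Hence $\Phi_{a,0,0}(w_1)=a\,\rho(\sigma_1\sigma_2)=\Phi_{a,0,0}(w_2)$ for \emph{every} faithful $\rho$ and every $a\in\mathbb{K}^{*}$, so for $n\geq 3$ the representation $\Phi_{a,0,0}$ is unfaithful regardless of whether any $v\in B_n$ satisfies $\rho(v)=a^{-s}e$. Thus the separation claim you hoped to supply cannot be rescued by monoid structure, and the ``only if'' in (a2)--(c2) does not follow by this route once $n\geq 3$. (For $n=2$ the argument is fine: $SM_2=\{\tau_1^{p}\sigma_1^{q}\}$ is commutative, so $(p,q)$ genuinely determines the element.) The gap is therefore not a matter of bookkeeping but of the underlying claim itself.
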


\begin{proof}
We consider each case separately.
\begin{itemize}
\item[(a1)] Suppose that there exists $r \in \mathbb{Z}^*$ such that $a^r=1$. For all $1\leq i \leq n-1$,  we have 
$$\Phi_{a,0,0}(\tau_i)=a\rho(\sigma_i),$$ and so $$\Phi_{a,0,0}(\tau_i^r)=a^r\rho(\sigma_i^r)=\rho(\sigma_i^r)=\Phi_{a,0,0}(\sigma_i^r),$$ with $\tau_i^r\neq \sigma_i^r$ because they have different geometrical shapes. Therefore, $\Phi_{a,0,0}$ is unfaithful.\\
\item[(a2)] Suppose that $a^r\neq 1$ for all $r\in \mathbb{Z}^*$. For the necessary condition, assume that $\Phi_{a,0,0}$ is unfaithful, then there exists $w_1, w_2 \in SM_n$ such that $w_1 \neq w_2$ and
$\Phi_{a,0,0}(w_1)=\Phi_{a,0,0}(w_2)$. Notice that at least one of the $w_i's$ does not belong to $B_n$ as $\rho$ is faithful. Without loss of generality, we may suppose that $w_1 \notin B_n$. Then, $w_1$ contains some $\tau_i's$ in its terms. But $\Phi_{a,0,0}(\tau_i)=a\rho(\sigma_i)$ for all $1\leq i \leq n-1$, which means that we can write $\Phi_{a,0,0}(w_1)=a^{s_1}\Phi_{a,0,0}(v_1)$, where $v_1 \in B_n$ and $s_1 \in \mathbb{N}^*$ is the number of times $\tau_i's$ occurs in $w$. Since $v_1 \in B_n$, it follows that $\Phi_{a,0,0}(w_1)=a^{s_1}\Phi_{a,0,0}(v_1)=a^{s_1}\rho(v_1)$. Similarly, we can see that $\Phi_{a,0,0}(w_2)=a^{s_2}\rho(v_2)$ where $v_2 \in B_n$ and $s_2 \in \mathbb{N}$ is the number of times $\tau_i's$ occurs in $w$ ($s_2$ may be $0$). But $\Phi_{a,0,0}(w_1)=\Phi_{a,0,0}(w_2)$, so $a^{s_1}\rho(v_1)=a^{s_2}\rho(v_2)$. Hence, $\rho(v_1v_2^{-1})=a^{-s_1}a^{s_2}e=a^{-s_1+s_2}e$. Remark that we may easily prove that $s_1\neq s_2$, since otherwise we get $v_1=v_2$ and then $w_1=w_2$, which is a contradiction. Therefore, there exists $v=v_1v_2^{-1} \in B_n$ such that $\rho(v)=a^{-s}e$, where $s=-s_1+s_2 \in \mathbb{Z}^*$, as required. 

Now, for the sufficient condition, suppose that there exist $v\in B_n$ and $s\in \mathbb{Z}^*$ such that $\rho(v)=a^{-s}e$. Then, for all $1 \leq i \leq n-1$, $\Phi_{a,0,0}(\tau_i^sv)=\Phi_{a,0,0}(\tau_i^s)\Phi_{a,0,0}(v)=a^s\rho(\sigma_i^s)\rho(v)=a^s\rho(\sigma_i^s)a^{-s}e=\rho(\sigma_i^s)=\Phi_{a,0,0}(\sigma_i^s)$. Notice that we can easily see that 
$\tau_i^sv \neq \sigma_i^s$, since otherwise we get $v^{-1}\sigma_i^{s}=\tau_i^s$ with $v^{-1}\sigma_i^{s}\in B_n$, which is impossible. Hence, $\rho$ is unfaithful as required.
\end{itemize}
The proof of (b1), (b2), (c1), and (c2) is typical.
\end{proof}

Now, we consider the case $n=2$ and study the kernel of $\Phi_{a,b,c}$ in this case. First of all, we find the nature of $\ker(\Phi_{a,b,c})$ in the case $\Phi_{a,b,c}:SM_2\rightarrow \mathbb{K}[G_2]$ is unfaithful.

\vspace{0.2cm}

\begin{theorem} \label{Thm5}
Let $G_2$ be a group and let $\rho: B_2 \rightarrow G_2$ be a faithful representation. Let $\mathbb{K}$ be a field with $a,b,c \in \mathbb{K}$ and let $\Phi_{a,b,c}: SM_2\rightarrow \mathbb{K}[G_2]$ be a representation of $SM_2$  defined by:
\begin{align*}
&\Phi_{a,b,c}(\sigma_1^{\pm 1})=\rho(\sigma_1^{\pm 1}) \hspace{0.2cm} \text{and} \hspace{0.2cm} \Phi_{a,b,c}(\tau_1)=a\rho(\sigma_1)+b\rho(\sigma_1^{-1})+ce,\hspace{0.2cm} i=1,2,\ldots,n-1.
\end{align*}
If $\Phi_{a,b,c}$ is unfaithful and $ker(\Phi_{a,b,c})$ is not trivial , then $\ker(\Phi_{a,b,c})=<\tau_1^p\sigma_1^q>$ for some $p \in \mathbb{N}^*$ and $q\in \mathbb{Z}$, where $p$ is minimal positive integer.
\end{theorem}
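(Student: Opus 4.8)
Since $SM_2$ is generated by $\sigma_1^{\pm1}$ and $\tau_1$, and since relation (5) gives $\tau_1\sigma_1=\sigma_1\tau_1$, the monoid $SM_2$ is commutative; in fact it is the commutative monoid generated by $\sigma_1,\sigma_1^{-1},\tau_1$ subject only to $\sigma_1\sigma_1^{-1}=e$. So every element of $SM_2$ is uniquely of the form $\tau_1^m\sigma_1^k$ with $m\in\mathbb{N}$ and $k\in\mathbb{Z}$; I would record this normal-form fact first (it follows from the known structure of $SM_2\cong \mathbb{N}\times\mathbb{Z}$ as a monoid, or can be checked directly). The next observation is that $\ker(\Phi_{a,b,c})$, being a submonoid of the cancellative abelian monoid $SM_2$, is a \emph{congruence-kernel}: if $\Phi(w_1)=\Phi(w_2)$ then, writing $w_i=\tau_1^{m_i}\sigma_1^{k_i}$, I can cancel a common power of $\sigma_1$ (these are invertible) and, after possibly swapping, get $\tau_1^{m_1}\sigma_1^{k_1}$ and $\tau_1^{m_2}\sigma_1^{k_2}$ with $m_1\ge m_2$; then multiply by $\tau_1^{-m_2}$ is not allowed, but I can instead argue at the level of the two-sided structure. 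Concretely, I would show: if $\Phi(\tau_1^{m_1}\sigma_1^{k_1})=\Phi(\tau_1^{m_2}\sigma_1^{k_2})$ with $(m_1,k_1)\neq(m_2,k_2)$, then WLOG $m_1>m_2$ and $\Phi(\tau_1^{m_1-m_2}\sigma_1^{k_1-k_2})=\Phi(\tau_1^{m_2}\sigma_1^{k_2})\cdot\rho(\sigma_1)^{-k_2}\cdot(\text{stuff})$ — this needs care because $\tau_1$ is not invertible in $\mathbb{K}[G_2]$; the clean route is to use that $\rho(\sigma_1)$ is invertible to reduce to comparing $\Phi(\tau_1^{m_1}\sigma_1^{k})$ with $\Phi(\tau_1^{m_2})$ for suitable exponents.

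Once the problem is reduced to: \emph{the set $S=\{(m,k)\in\mathbb{N}\times\mathbb{Z}: \Phi(\tau_1^m\sigma_1^k)=e\text{ in }\mathbb{K}[G_2]\}$ is a submonoid of $\mathbb{N}\times\mathbb{Z}$}, I would analyze $S$ directly. If $S\neq\{(0,0)\}$, then $S$ contains some $(m,k)$ with $m\ge1$ (it cannot contain $(0,k)$ with $k\neq0$, since $\Phi(\sigma_1^k)=\rho(\sigma_1^k)=\rho(\sigma_1)^k$, and if this equals $e$ then $\sigma_1^k=e$ in $B_2$ by faithfulness of $\rho$, forcing $k=0$ since $B_2\cong\mathbb{Z}$). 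Let $p=\min\{m\ge1:(m,k)\in S\text{ for some }k\}$ and fix $q$ with $(p,q)\in S$, so $v:=\tau_1^p\sigma_1^q\in\ker\Phi$. I then claim $\ker(\Phi_{a,b,c})=\langle v\rangle$. The containment $\langle v\rangle\subseteq\ker\Phi$ is immediate. For the reverse, take any $\tau_1^m\sigma_1^k\in\ker\Phi$; by minimality of $p$, write $m=p\ell+r$ with $0\le r<p$ and $\ell\ge0$. Multiplying, $\Phi(\tau_1^m\sigma_1^k)=\Phi(v)^\ell\,\Phi(\tau_1^r\sigma_1^{k-q\ell})=\Phi(\tau_1^r\sigma_1^{k-q\ell})$ — wait, $\Phi(v)=e$ so this product collapses, giving $\Phi(\tau_1^r\sigma_1^{k-q\ell})=e$, i.e. $(r,k-q\ell)\in S$; by minimality of $p$ and $0\le r<p$ we must have $r=0$, whence $(0,k-q\ell)\in S$ and so $k=q\ell$ by the $B_2$-faithfulness argument above. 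Therefore $\tau_1^m\sigma_1^k=\tau_1^{p\ell}\sigma_1^{q\ell}=(\tau_1^p\sigma_1^q)^\ell=v^\ell\in\langle v\rangle$ (using commutativity), proving equality.

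The subtle point to handle with care — and what I'd flag as the main obstacle — is the reduction in the first paragraph: passing from ``$\Phi(w_1)=\Phi(w_2)$, $w_1\ne w_2$'' to ``some $\tau_1^p\sigma_1^q$ lies in the kernel'', because $\tau_1$ is a genuine zero-divisor-prone non-unit in $\mathbb{K}[G_2]$, so one cannot literally ``divide out'' $\tau_1^{m_2}$. The fix is to note that $\Phi(\tau_1)=a\rho(\sigma_1)+b\rho(\sigma_1)^{-1}+ce$ lies in the (commutative) subalgebra $\mathbb{K}[\langle\rho(\sigma_1)\rangle]\cong\mathbb{K}[\mathbb{Z}]=\mathbb{K}[t^{\pm1}]$, since $B_2=\langle\sigma_1\rangle$ and $\rho$ is faithful so $\rho(\sigma_1)$ has infinite order. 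Thus the whole image of $\Phi_{a,b,c}$ lives in the integral domain $\mathbb{K}[t^{\pm1}]$ (identifying $\rho(\sigma_1)\leftrightarrow t$), where $\Phi(\tau_1)=at+bt^{-1}+c$ and $\Phi(\sigma_1)=t$. Now cancellation is legitimate: from $\Phi(\tau_1^{m_1}\sigma_1^{k_1})=\Phi(\tau_1^{m_2}\sigma_1^{k_2})$, i.e. $(at+bt^{-1}+c)^{m_1}t^{k_1}=(at+bt^{-1}+c)^{m_2}t^{k_2}$ in $\mathbb{K}[t^{\pm1}]$, cancel $t^{\min(k_1,k_2)}$ and the common power $(at+bt^{-1}+c)^{\min(m_1,m_2)}$ (both nonzero in the domain, provided $(a,b,c)\neq(0,0,0)$, which holds since otherwise $\ker\Phi$ is not of the asserted form — but actually one should just treat this as the hypothesis that $\Phi$ is defined, and the degenerate all-zero case is excluded by Proposition~\ref{propp}'s setting), obtaining $(at+bt^{-1}+c)^{m_1-m_2}t^{k_1-k_2}=1$ after reindexing with $m_1\ge m_2$. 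Writing $p'=m_1-m_2$, $q'=k_1-k_2$ and translating back, $\tau_1^{p'}\sigma_1^{q'}\in\ker\Phi$ with $p'\ge1$ (if $p'=0$ we'd get $t^{q'}=1$ hence $q'=0$ hence $w_1=w_2$, contradiction). This puts us exactly in the situation of the second paragraph, and choosing $p$ minimal finishes the proof. Throughout, the ambient object is just $\mathbb{K}[t^{\pm1}]$, so all manipulations are elementary once that identification is set up.
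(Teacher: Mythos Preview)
Your core argument (the second paragraph) is correct and is essentially the paper's proof: write every element of $SM_2$ in normal form $\tau_1^m\sigma_1^k$, observe that $m=0$ forces $k=0$ by faithfulness of $\rho$ and torsion-freeness of $B_2\cong\mathbb{Z}$, pick $(p,q)$ with $p$ minimal, and run the division algorithm $m=p\ell+r$ to conclude $r=0$ and then $k=q\ell$. The paper does exactly this, splitting into the cases $r=p$ and $r>p$ rather than treating them uniformly, but the content is the same.

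Your first and third paragraphs, however, attack a problem you were not asked to solve. The hypothesis of the theorem already includes ``$\ker(\Phi_{a,b,c})$ is not trivial'', so there is no need to manufacture a kernel element from a mere coincidence $\Phi(w_1)=\Phi(w_2)$; you may simply start from a nontrivial $\tau_1^m\sigma_1^k\in\ker\Phi$. That said, your observation in the third paragraph---that the entire image of $\Phi_{a,b,c}$ lies in the subalgebra $\mathbb{K}[\langle\rho(\sigma_1)\rangle]\cong\mathbb{K}[t^{\pm1}]$, an integral domain, so that one may legitimately cancel $(at+bt^{-1}+c)^{\min(m_1,m_2)}$ whenever $(a,b,c)\neq(0,0,0)$---is correct and actually proves \emph{more} than the paper: it shows that for $n=2$ and $(a,b,c)\neq(0,0,0)$, unfaithfulness of $\Phi_{a,b,c}$ forces the kernel to be nontrivial. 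This gives a negative answer, in the case $n=2$, to the question the paper raises immediately after this theorem about whether an unfaithful $\Phi_{a,b,c}$ can have trivial kernel.
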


\begin{proof}
Since $SM_2=<\sigma_1^{\pm 1},\tau_1>$ and $\sigma_1$ commute with $\tau_1$, it follows that any word in $SM_2$ can be written as: $\tau_1^p\sigma_1^q$ for some $p\in \mathbb{N}$ and $q\in \mathbb{Z}$. Now, as $\ker(\Phi_{a,b,c})$ is not trivial, we pick $v=\tau_1^p\sigma_1^q \in \ker(\Phi_{a,b,c})$ to be a nontrivial element such that $p \in \mathbb{N}^*$ and $q\in \mathbb{Z}$, where $p$ is minimal positive integer. The positive integer $p$ should be nonzero as $\rho$ is faithful. Now, we require to prove that $\ker(\Phi_{a,b,c})=<v>$. Let $u \in \ker(\Phi_{a,b,c})$ be a nontrivial element, then $u=\tau_1^r\sigma_1^s$, where $r\in \mathbb{N}^*$ and $s\in \mathbb{Z}$. Since $p$ is minimal, it follows that $p \leq r$. We consider the following two cases:
\begin{itemize}
\item[(a)] If $p=r$, then $u=\tau_1^r\sigma_1^s=\tau_1^p\sigma_1^s=\tau_1^p\sigma_1^q\sigma_1^{s-q}=v\sigma_1^{s-q}$. So, $\Phi_{a,b,c}(u)=\Phi_{a,b,c}(v)\Phi_{a,b,c}(\sigma_1^{s-q})$. But $u,v \in \ker(\Phi_{a,b,c})$ gives that $\Phi_{a,b,c}(u)=\Phi_{a,b,c}(v)=e$, and so $\Phi_{a,b,c}(\sigma_1^{s-q})=e$. Note that $\Phi_{a,b,c}(\sigma_1^{s-q})=\rho(\sigma_1^{s-q})$ and $\rho$ is faithful, so $\sigma_1^{s-q}=e$. But we know that $\sigma_1^m \neq e$ for all $m \in \mathbb{Z}^*$ as $B_2$ is torsion free, so $s-q=0$ and so $s=q$. Thus, $u=\tau_1^r\sigma_1^s=\tau_1^p\sigma_1^q=v$.
\item[(b)] Suppose that $p<r$. Then, there exists $m \in \mathbb{N}^*$ such that $r=mp+(r-mp)$, where $0\leq r-mp< p$. Now, $u=\tau_1^r\sigma_1^s=\tau_1^{r-mp}\tau_1^{mp}\sigma_1^{s-mq}\sigma_1^{mq}=\tau_1^{mp}\sigma_1^{mq}\tau_1^{r-mp}\sigma_1^{s-mq}=v^m\tau_1^{r-mp}\sigma_1^{s-mq}$. But $\Phi_{a,b,c}(u)=\Phi_{a,b,c}(v)=e$ implies that $\Phi_{a,b,c}(\tau_1^{r-mp}\sigma_1^{s-mq})=e$. Hence, $\tau_1^{r-mp}\sigma_1^{s-mq} \in \ker(\Phi_{a,b,c})$ where $0\leq r-mp< p$. But $p$ is a minimal positive integer, so $r-mp=0$ and so $r=mp$. Repeat the same work in (a) we get $s=mq$. Thus, $u=\tau_1^r\sigma_1^s=\tau_1^{mp}\sigma_1^{mq}=(\tau_1^{p}\sigma_1^{q})^m=v^m$.
\end{itemize}
Hence, for all $u \in \ker(\Phi_{a,b,c})$, there exists $m \in \mathbb{N}$ such that $u=v^m$. Therefore, $\ker(\Phi_{a,b,c})=<v>$, as required.
\end{proof}

\vspace*{0.2cm}

Remark that, the condition "$\ker(\Phi_{a,b,c})$ is not trivial" is a must in the previous theorem. Since in Monoid Representation Theory in general, we may have unfaithful monoid representations with trivial kernel. This lead to the following question.

\vspace*{0.2cm}

\begin{question}
Can we find a representation $\rho: B_n \rightarrow G_n$ and a field $\mathbb{K}$ with $a,b,c \in \mathbb{K}$ in a way that the extension $\Phi_{a,b,c}$ of $\rho$ is unfaithful representation with trivial kernel? 
\end{question}

\vspace*{0.2cm}

Now, we study the kernel of the representation $\Phi_{a,b,c}:SM_2\rightarrow \mathbb{K}[G_2]$ in some cases.

\vspace*{0.2cm}

\begin{theorem} \label{Thm6}
Let $G_2$ be a group and let $\rho: B_2 \rightarrow G_2$ be a faithful representation. Let $\mathbb{K}$ be a field with $a,b,c \in \mathbb{K}$ and let $\Phi_{a,b,c}: SM_2\rightarrow \mathbb{K}[G_2]$ be a representation of $SM_2$  defined by:
\begin{align*}
&\Phi_{a,b,c}(\sigma_1^{\pm 1})=\rho(\sigma_1^{\pm 1}) \hspace{0.2cm} \text{and} \hspace{0.2cm} \Phi_{a,b,c}(\tau_1)=a\rho(\sigma_1)+b\rho(\sigma_1^{-1})+ce,\hspace{0.2cm} i=1,2,\ldots,n-1.
\end{align*}
\begin{itemize}
\item[(i)] Suppose that for all $s\in \mathbb{Z}^*$, $\rho(\sigma_1^s)\neq de$ for all $d \in \mathbb{K}$, then $\ker(\Phi_{a,b,c})$ is trivial.
\item[(ii)] If there exists $d \in \mathbb{K}$ such that $\rho(\sigma_1)=de$, then $\ker(\Phi_{a,b,c})$ is nontrivial if and only if there exists $p \in \mathbb{N}^*$ and $q\in \mathbb{Z}$ with $\displaystyle \sum_{i+j+k=p \atop i,j,k \in \mathbb{N}} \frac{p!}{i!j!k!}a^ib^jc^kd^{i-j+q}-1=0.$
\end{itemize}
\end{theorem}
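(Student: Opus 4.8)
The plan is to exploit the structural description of $SM_2$ already used in Theorem~\ref{Thm5}: every element is of the form $\tau_1^p\sigma_1^q$ with $p\in\mathbb N$, $q\in\mathbb Z$, and $\sigma_1$ commutes with $\tau_1$. So to understand $\ker(\Phi_{a,b,c})$ it suffices to compute $\Phi_{a,b,c}(\tau_1^p\sigma_1^q)$ explicitly. Since $\Phi_{a,b,c}(\tau_1)=a\rho(\sigma_1)+b\rho(\sigma_1^{-1})+ce$ and the three summands $\rho(\sigma_1),\rho(\sigma_1^{-1}),e$ all lie in the \emph{commutative} subalgebra of $\mathbb K[G_2]$ generated by $\rho(\sigma_1)$ (because $B_2=\langle\sigma_1\rangle$ is abelian, so $\rho(B_2)$ is abelian), I can apply the multinomial theorem:
\[
\Phi_{a,b,c}(\tau_1^p)=\bigl(a\rho(\sigma_1)+b\rho(\sigma_1^{-1})+ce\bigr)^p=\sum_{i+j+k=p}\frac{p!}{i!\,j!\,k!}\,a^i b^j c^k\,\rho(\sigma_1)^{\,i-j}.
\]
Multiplying by $\Phi_{a,b,c}(\sigma_1^q)=\rho(\sigma_1^q)$ gives $\Phi_{a,b,c}(\tau_1^p\sigma_1^q)=\sum_{i+j+k=p}\frac{p!}{i!\,j!\,k!}a^ib^jc^k\rho(\sigma_1^{\,i-j+q})$.

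For part (i), assume $\rho(\sigma_1^s)\neq de$ for every $s\in\mathbb Z^*$ and every $d\in\mathbb K$. I would take a hypothetical nontrivial $w\in\ker(\Phi_{a,b,c})$; since $\rho$ is faithful on $B_2$, $w$ must involve $\tau_1$, so $w=\tau_1^p\sigma_1^q$ with $p\geq 1$. By Theorem~\ref{Thm5} we may moreover assume $p$ is minimal, so $\ker=\langle\tau_1^p\sigma_1^q\rangle$. The key point is that in the expansion above the group elements $\rho(\sigma_1^{\,i-j+q})$ that occur for distinct values of the exponent $i-j$ are linearly independent over $\mathbb K$ in $\mathbb K[G_2]$ — this holds precisely because $\sigma_1$ has infinite order in $B_2$ ($B_2$ is torsion free) and $\rho$ is faithful, so the powers $\rho(\sigma_1^\ell)$, $\ell\in\mathbb Z$, are pairwise distinct group elements, hence form part of the standard $\mathbb K$-basis of $\mathbb K[G_2]$. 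Therefore $\Phi_{a,b,c}(\tau_1^p\sigma_1^q)=e$ forces, after collecting terms with the same exponent, that the only surviving exponent is $0$ and its coefficient is $1$; but when $p\geq 1$ the exponent $i-j$ ranges over $\{-p,\dots,p\}$ and for it to collapse to a single value one needs $\rho(\sigma_1^{\,i-j+q})=\rho(\sigma_1^{\,i'-j'+q})$ for distinct $(i,j),(i',j')$, i.e. $\rho(\sigma_1^s)=e$ for some $s\neq 0$, contradicting the hypothesis (with $d=1$). This rules out a nontrivial kernel.

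For part (ii), substitute $\rho(\sigma_1)=de$, so $\rho(\sigma_1^{\ell})=d^{\ell}e$ for all $\ell\in\mathbb Z$, and the displayed expansion becomes
\[
\Phi_{a,b,c}(\tau_1^p\sigma_1^q)=\Bigl(\sum_{i+j+k=p}\frac{p!}{i!\,j!\,k!}\,a^ib^jc^k d^{\,i-j+q}\Bigr)e .
\]
Thus $\tau_1^p\sigma_1^q\in\ker(\Phi_{a,b,c})$ if and only if the scalar in parentheses equals $1$, which is exactly the stated equation. For the ``if'' direction, such a pair $(p,q)$ with $p\in\mathbb N^*$ gives a nontrivial kernel element (nontrivial because it involves $\tau_1$ while the identity of $SM_2$ does not — here one invokes, as in the proofs above, that $\tau_1^p\sigma_1^q$ with $p\geq 1$ is geometrically/combinatorially distinct from any braid word, in particular from the empty word). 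For the ``only if'' direction, a nontrivial kernel element is $\tau_1^p\sigma_1^q$ with $p\geq 1$ by faithfulness of $\rho$, and applying $\Phi_{a,b,c}$ and comparing the coefficient of $e$ yields the equation.

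The main obstacle is the linear-independence argument in part (i): one must be careful that "collecting terms of the same $\sigma_1$-exponent" is legitimate, i.e. that the distinct powers $\rho(\sigma_1^\ell)$ really are distinct \emph{elements of $G_2$} (so that they are $\mathbb K$-linearly independent in the group algebra), and then argue that a nontrivial multinomial expansion with $p\geq 1$ cannot reduce to the single basis vector $e$ with coefficient $1$ unless some nontrivial power of $\rho(\sigma_1)$ equals $e$. Equivalently, one can phrase (i) as the contrapositive of (the nontrivial direction of) a statement like (ii): if the kernel is nontrivial then some relation among the $\rho(\sigma_1^\ell)$ must hold, and $\rho(\sigma_1^s)\neq de$ for all $s\neq 0$, $d\in\mathbb K$, exactly precludes this. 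Parts using Theorem~\ref{Thm5} streamline the bookkeeping by letting us assume the kernel is cyclic generated by a single $\tau_1^p\sigma_1^q$ with $p$ minimal, so that only one equation needs to be examined.
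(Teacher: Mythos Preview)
Your approach matches the paper's: write every element of $SM_2$ as $\tau_1^p\sigma_1^q$, expand $\Phi_{a,b,c}(\tau_1^p\sigma_1^q)$ via the multinomial theorem, and use that the distinct powers $\rho(\sigma_1^\ell)$ are $\mathbb{K}$-linearly independent in $\mathbb{K}[G_2]$; part~(ii) is handled exactly as in the paper.

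In part~(i), however, your ``collapse'' argument is not the paper's line and is not quite right as stated: for the expansion to equal $e$ one does \emph{not} need distinct powers $\rho(\sigma_1^{i-j+q})$ to coincide --- the coefficients of the non-identity basis vectors could simply vanish while the underlying group elements remain distinct. The paper instead isolates the coefficient of the single basis vector $e$: it asserts that under the hypothesis of (i) none of the group elements $\rho(\sigma_1)^{i-j+r}$ occurring in the expansion is equal to $e$, so in the equation
\[
\sum_{i+j+k=p}\frac{p!}{i!\,j!\,k!}\,a^ib^jc^k\,\rho(\sigma_1)^{\,i-j+q}-e=0
\]
the coefficient of $e$ on the left is $-1$, a contradiction. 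This single-coefficient comparison is precisely the step you flagged as the ``main obstacle'' in your last paragraph but did not actually carry out; the paper's proof supplies it directly rather than arguing about the whole sum collapsing.
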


\begin{proof}
We consider each case in the following.
\begin{itemize} 
\item[(i)] Let $u=\tau_1^s\sigma_1^r$ be a nontrivial element in $SM_2$. Suppose to get a contradiction that $u \in \ker(\Phi_{a,b,c})$. We have $\Phi_{a,b,c}(u)=\Phi_{a,b,c}(\tau_1^s)\Phi_{a,b,c}(\sigma_1^r)=(a\rho(\sigma_1)+b\rho(\sigma_1^{-1})+ce)^s\rho(\sigma_1)^r= \displaystyle \sum_{i+j+k=s \atop i,j,k \in \mathbb{N}} \frac{s!}{i!j!k!}a^ib^jc^k\rho(\sigma_1)^{i-j+r}=e$. So, $\displaystyle \sum_{i+j+k=s \atop i,j,k \in \mathbb{N}} \frac{s!}{i!j!k!}a^ib^jc^k\rho(\sigma_1)^{i-j+r}-e=0$. Now, as for all $s\in \mathbb{Z}^*$, $\rho(\sigma_1^s)\neq de$ for all $d \in \mathbb{K}$, we see that the terms of the sum  $\displaystyle \sum_{i+j+k=s \atop i,j,k \in \mathbb{N}} \frac{s!}{i!j!k!}a^ib^jc^k\rho(\sigma_1)^{i-j+r}-e$ are all different elements of $\mathbb{K}[G_n]$. Moreover, the terms $\rho(\sigma_1)^{i-j+r}$ are all different for all $i,j,k \in \mathbb{N}$ with $i+j+k=s$, and they can not be equal to the neutral element $e$. But $\mathbb{K}[G_n]$ is a group algebra over the field $\mathbb{K}$, hence, each coefficient of this sum is $0$, and so the coefficient of $e$ in the sum is zero, which is a contradiction. Thus $w \notin \ker(\Phi_{a,b,c})$, and so $\ker(\Phi_{a,b,c})$ is trivial.
\item[(ii)] Consider an element $d \in \mathbb{K}$ such that $\rho(\sigma_1)=de$. For the necessary condition, suppose  $\ker(\Phi_{a,b,c})$ is nontrivial, then, by Theorem \ref{Thm5}, there exists $p \in \mathbb{N}^*$ and $q\in \mathbb{Z}$ such that $\ker(\Phi_{a,b,c})=<\tau_1^p\sigma_1^q>$. So, $\Phi_{a,b,c}(\tau_1^p\sigma_1^q)=\displaystyle \sum_{i+j+k=p \atop i,j,k \in \mathbb{N}} \frac{p!}{i!j!k!}a^ib^jc^k\rho(\sigma_1)^{i-j+q} =\displaystyle \sum_{i+j+k=p \atop i,j,k \in \mathbb{N}} \frac{p!}{i!j!k!}a^ib^jc^kd^{i-j+q}e=e$, and so $\displaystyle \sum_{i+j+k=p \atop i,j,k \in \mathbb{N}} \frac{p!}{i!j!k!}a^ib^jc^kd^{i-j+q}-1=0$ as required. The sufficient condition can be proved similarly.
\end{itemize}
\end{proof}

\begin{example}
\noindent As an example for the above Theorem, we take the Birman representation defined in \cite{12} for $n=2$. We can see that the Birman representation satisfy the condition of Theorem \ref{Thm6} (i), since $\rho=id$, and so for all $s\in \mathbb{Z}^*$, $\rho(\sigma_1^s)=\sigma_1^s\neq de$ for all $d \in \mathbb{K}$ because $\sigma_1^s$ and $e$ have different geometrical shapes. So, we have $\Phi_{a,b,c}$ has trivial kernel, which is proved by Paris in \cite{12}.
\end{example}

\vspace*{0.2cm}

\begin{question}
Under which conditions we get $\Phi_{a,b,c}$ is faithful in Theorem \ref{Thm6} (i)?
\end{question}

\vspace*{0.2cm}

Notice that the remaining case in Theorem \ref{Thm6} is when $\rho(\sigma_1)\neq de$ for all $d\in \mathbb{K}$ and there exists $s\in \mathbb{Z}^*$ such that $\rho(\sigma_1^s)=d_se$ for some $d_s \in \mathbb{K}$. We reduce the result of this case in the following proposition.

\vspace*{0.2cm}

\begin{proposition} \label{prop1}
Let $G_2$ be a group and let $\rho: B_2 \rightarrow G_2$ be a faithful representation. Let $\mathbb{K}$ be a field with $a,b,c \in \mathbb{K}$ and let $\Phi_{a,b,c}: SM_2\rightarrow \mathbb{K}[G_2]$ be a representation of $SM_2$  defined by:
\begin{align*}
&\Phi_{a,b,c}(\sigma_1^{\pm 1})=\rho(\sigma_1^{\pm 1}) \hspace{0.2cm} \text{and} \hspace{0.2cm} \Phi_{a,b,c}(\tau_1)=a\rho(\sigma_1)+b\rho(\sigma_1^{-1})+ce,\hspace{0.2cm} i=1,2,\ldots,n-1.
\end{align*}
Suppose that $\rho(\sigma_1)\neq de$ for all $d\in \mathbb{K}$ and there exists $s\in \mathbb{Z}^*$ such that $\rho(\sigma_1^s)=d_se$ for some $d_s \in \mathbb{K}$. Choose $s\in \mathbb{Z}^*$ such that $|s|$ is minimal. Let $H_2=<\rho(\sigma_1),\rho(\sigma_1)^2,\ldots, \rho(\sigma_1)^{s-1}>$ be a subgroup of $G_2$ and consider the representation $\Psi_{a,b,c}: SM_2\rightarrow \mathbb{K}[H_2]$ defined by: $\Psi_{a,b,c}(w)=\Phi_{a,b,c}(w)$ for all $w\in SM_n$. Then, $\ker(\Phi_{a,b,c})=\ker(\Psi_{a,b,c})$.
\end{proposition}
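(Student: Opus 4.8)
The plan is to observe that $\Phi_{a,b,c}$ already takes all of its values inside the subalgebra $\mathbb{K}[H_2]$ of $\mathbb{K}[G_2]$, so that $\Psi_{a,b,c}$ is nothing but $\Phi_{a,b,c}$ with its codomain cut down to $\mathbb{K}[H_2]$, and then to read off the equality of kernels from the injectivity of the inclusion $\mathbb{K}[H_2]\hookrightarrow\mathbb{K}[G_2]$.

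First I would note that $H_2$ is the subgroup of $G_2$ generated by $\rho(\sigma_1)$ — the listed elements $\rho(\sigma_1),\rho(\sigma_1)^2,\ldots,\rho(\sigma_1)^{s-1}$ all being powers of $\rho(\sigma_1)$ — so that in particular $\rho(\sigma_1)^{\pm 1}\in H_2$ and $\rho(\sigma_1^{s})=d_s e\in H_2$. Since $SM_2=\langle\sigma_1^{\pm 1},\tau_1\rangle$, the images under $\Phi_{a,b,c}$ of the generators of $SM_2$ are $\rho(\sigma_1)^{\pm 1}\in H_2\subseteq\mathbb{K}[H_2]$ and $a\rho(\sigma_1)+b\rho(\sigma_1)^{-1}+ce\in\mathbb{K}[H_2]$. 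As $\mathbb{K}[H_2]$ is a $\mathbb{K}$-subalgebra of $\mathbb{K}[G_2]$ and $\Phi_{a,b,c}$ is multiplicative, it follows that $\Phi_{a,b,c}(w)\in\mathbb{K}[H_2]$ for every $w\in SM_2$. Hence $\Psi_{a,b,c}\colon SM_2\to\mathbb{K}[H_2]$ is well defined, and it is a monoid homomorphism (a representation of $SM_2$), being simply a corestriction of $\Phi_{a,b,c}$.

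Next I would invoke the standard fact that the inclusion $H_2\hookrightarrow G_2$ induces an injective $\mathbb{K}$-algebra homomorphism $\iota\colon\mathbb{K}[H_2]\to\mathbb{K}[G_2]$: the group elements of $H_2$ form a $\mathbb{K}$-basis of $\mathbb{K}[H_2]$, and, being pairwise distinct elements of $G_2$, they are linearly independent in $\mathbb{K}[G_2]$. One has $\iota(e)=e$ and $\Phi_{a,b,c}=\iota\circ\Psi_{a,b,c}$ by construction. Hence, for every $w\in SM_2$,
\begin{align*}
w\in\ker(\Phi_{a,b,c}) &\iff \iota\bigl(\Psi_{a,b,c}(w)\bigr)=\iota(e)\\
&\iff \Psi_{a,b,c}(w)=e \iff w\in\ker(\Psi_{a,b,c}),
\end{align*}
the second equivalence being exactly the injectivity of $\iota$. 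This proves $\ker(\Phi_{a,b,c})=\ker(\Psi_{a,b,c})$.

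I do not expect a genuine obstacle here: the statement is really the remark that $\Phi_{a,b,c}$ factors through $\mathbb{K}[H_2]$ together with the fact that the group-algebra construction turns a subgroup inclusion into an injection. The only points worth a sentence of care are that the listed generating set for $H_2$ does produce the cyclic group $\langle\rho(\sigma_1)\rangle$, so that every term occurring in $\Phi_{a,b,c}$ of a generator lies in $\mathbb{K}[H_2]$, and that ``$\ker$'' is being used throughout as the preimage of the identity $e$, which is what makes the chain of equivalences above legitimate. Note that the minimality of $|s|$ plays no role in this equality; it is there only to pin $H_2$ down unambiguously for the subsequent analysis.
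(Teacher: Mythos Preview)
Your argument is correct and is precisely the routine verification the paper has in mind: the paper's own proof is the single word ``Trivial,'' i.e.\ the observation that $\Phi_{a,b,c}$ already lands in $\mathbb{K}[H_2]$ and that the inclusion $\mathbb{K}[H_2]\hookrightarrow\mathbb{K}[G_2]$ is injective, so the kernels coincide. Your write-up simply spells out these two points.
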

\begin{proof}
Trivial.
\end{proof}

\vspace*{0.2cm}

\begin{question}
Under which conditions we get $\ker(\Phi_{a,b,c})$ is trivial in Proposition \ref{prop1}? Also, under which conditions we get $\Phi_{a,b,c}$ is faithful in Proposition \ref{prop1}?
\end{question}

\vspace*{0.2cm}

Now, we consider the case $n\geq 2$ in order to study the shape of the possible elements in $\ker(\Phi_{a,b,c})$. First of all, we consider the following two Lemmas.

\vspace*{0.2cm}

\begin{lemma} \label{lemm8}
Let $G_n$ be a group and let $\rho: B_n \rightarrow G_n$ be a representation. Let $\mathbb{K}$ be a field with $a,b,c \in \mathbb{K}$ and let $\Phi_{a,b,c}: SM_n\rightarrow \mathbb{K}[G_n]$ be a representation of $SM_n$  defined by:
\begin{align*}
&\Phi_{a,b,c}(\sigma_i^{\pm 1})=\rho(\sigma_i^{\pm 1}) \hspace{0.2cm} \text{and} \hspace{0.2cm} \Phi_{a,b,c}(\tau_i)=a\rho(\sigma_i)+b\rho(\sigma_i^{-1})+ce,\hspace{0.2cm} i=1,2,\ldots,n-1.
\end{align*}
Then, $w \in \ker(\Phi_{a,b,c})$ if and only if $uwu^{-1}\in \ker(\Phi_{a,b,c})$ for all $u\in B_n$.
\end{lemma}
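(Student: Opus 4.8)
The plan is to show both implications via the ring homomorphism property of $\Phi_{a,b,c}$, using the fact that $u \in B_n$ is a unit in $SM_n$ (with inverse $u^{-1} \in B_n$) and that $\Phi_{a,b,c}(u) = \rho(u)$ is therefore an invertible element of $\mathbb{K}[G_n]$, namely a group element in $G_n \subseteq \mathbb{K}[G_n]$. Note first that $\Phi_{a,b,c}$, although defined on a monoid, is multiplicative: $\Phi_{a,b,c}(xy) = \Phi_{a,b,c}(x)\Phi_{a,b,c}(y)$ for all $x,y \in SM_n$, and $\Phi_{a,b,c}$ sends the identity of $SM_n$ to $e$, the identity of $\mathbb{K}[G_n]$. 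The kernel here means $\ker(\Phi_{a,b,c}) = \{w \in SM_n : \Phi_{a,b,c}(w) = e\}$.

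First I would prove the forward direction. Suppose $w \in \ker(\Phi_{a,b,c})$, so $\Phi_{a,b,c}(w) = e$. Fix $u \in B_n$. Then $uwu^{-1}$ is a well-defined element of $SM_n$ (since $\sigma$-generators and their inverses all lie in $SM_n$), and by multiplicativity
\begin{align*}
\Phi_{a,b,c}(uwu^{-1}) &= \Phi_{a,b,c}(u)\,\Phi_{a,b,c}(w)\,\Phi_{a,b,c}(u^{-1}) \\
&= \rho(u)\, e\, \rho(u^{-1}) = \rho(u)\rho(u)^{-1} = e.
\end{align*}
Hence $uwu^{-1} \in \ker(\Phi_{a,b,c})$ for every $u \in B_n$.

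For the converse, suppose $uwu^{-1} \in \ker(\Phi_{a,b,c})$ for all $u \in B_n$; in particular this holds for $u = e$, which gives $w = ewe^{-1} \in \ker(\Phi_{a,b,c})$ directly. (Alternatively, one may run the same computation in reverse: from $\Phi_{a,b,c}(uwu^{-1}) = e$ we get $\rho(u)\Phi_{a,b,c}(w)\rho(u)^{-1} = e$, hence $\Phi_{a,b,c}(w) = \rho(u)^{-1} e\, \rho(u) = e$.) Either way, $w \in \ker(\Phi_{a,b,c})$, completing the proof.

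I do not anticipate a genuine obstacle here: the only point requiring a little care is that $\Phi_{a,b,c}$ is a monoid homomorphism rather than a group homomorphism, so one cannot speak of "$\Phi_{a,b,c}(w^{-1})$" in general — but this is irrelevant since $u$ ranges only over $B_n$, where inverses do exist and $\Phi_{a,b,c}(u^{-1}) = \rho(u^{-1}) = \rho(u)^{-1}$ is legitimate. The mild subtlety worth a sentence in the write-up is that $\Phi_{a,b,c}(u) = \rho(u) \in G_n$ is invertible in $\mathbb{K}[G_n]$, which is what makes the conjugation cancel.
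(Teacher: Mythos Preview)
Your proof is correct and is exactly the routine conjugation/invertibility argument the paper has in mind; the paper itself simply writes ``Trivial.'' Your only addition is spelling out the monoid-homomorphism point and the invertibility of $\rho(u)$ in $\mathbb{K}[G_n]$, which is fine.
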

\begin{proof}
Trivial.
\end{proof}

\vspace*{0.2cm}

\begin{lemma} \label{lemm9}
Let $G_n$ be a group and let $\rho: B_n \rightarrow G_n$ be a representation. Let $\mathbb{K}$ be a field with $a,b,c \in \mathbb{K}$ and let $\Phi_{a,b,c}: SM_n\rightarrow \mathbb{K}[G_n]$ be a representation of $SM_n$  defined by:
\begin{align*}
&\Phi_{a,b,c}(\sigma_i^{\pm 1})=\rho(\sigma_i^{\pm 1}) \hspace{0.2cm} \text{and} \hspace{0.2cm} \Phi_{a,b,c}(\tau_i)=a\rho(\sigma_i)+b\rho(\sigma_i^{-1})+ce,\hspace{0.2cm} i=1,2,\ldots,n-1.
\end{align*}
Then, each word $w\in SM_n$ can be written as: $w=\tau_1^{r_1}u_1\tau_1^{r_2}u_2 \ldots \tau_1^{r_k}u_k$, where $r_i\in \mathbb{N}$ and $u_i \in B_n$ for all $1\leq i\leq k.$
\end{lemma}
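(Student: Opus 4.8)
The plan is to first replace the generating set of $SM_n$ by $\{\sigma_1^{\pm 1},\dots,\sigma_{n-1}^{\pm 1},\tau_1\}$, and then to group an arbitrary word over this set into alternating blocks. As already observed in Section 2, relations (6) and (7) allow each $\tau_i$ with $2\le i\le n-1$ to be rewritten as a word in $\tau_1$ and the $\sigma_j^{\pm 1}$; concretely, (6) gives $\tau_{i+1}=\sigma_i\sigma_{i+1}\tau_i\sigma_{i+1}^{-1}\sigma_i^{-1}$, so by induction on $i$ we get $\tau_i=g_i\tau_1 g_i^{-1}$ for some $g_i\in B_n$ (with $g_1=e$ and $g_{i+1}=\sigma_i\sigma_{i+1}g_i$). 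Consequently every $w\in SM_n$, being a product of the generators $\sigma_j^{\pm 1}$ and $\tau_1,\dots,\tau_{n-1}$, can be rewritten as a product of letters $\sigma_j^{\pm 1}$ and $\tau_1$ only.

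Given such a word, I would read it from left to right and collect the maximal sub-runs consisting solely of $\sigma_j^{\pm 1}$-letters; each such run represents an element of $B_n$, while the remaining letters are occurrences of $\tau_1$, which between two consecutive $B_n$-runs combine into a power $\tau_1^{r}$ with $r\ge 1$. This exhibits $w$ in the form $w=u_0\tau_1^{r_1}u_1\tau_1^{r_2}u_2\cdots\tau_1^{r_k}u_k$ with $u_i\in B_n$ and $r_i\ge 1$. If $u_0=e$ we are done; otherwise, since $0\in\mathbb{N}$, we may simply prepend $\tau_1^{0}$ and reindex to obtain $w=\tau_1^{r_1}u_1\cdots\tau_1^{r_k}u_k$ with all $r_i\in\mathbb{N}$ and $u_i\in B_n$, which is the asserted form. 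Equivalently, one may run a short induction on the length of $w$ as a word in $\sigma_j^{\pm 1}$ and $\tau_1$: right-multiplying a word already in the desired form by $\sigma_j^{\pm 1}$ absorbs it into $u_k$, right-multiplying by $\tau_1$ appends a block $\tau_1^{1}e$, and right-multiplying by a general $\tau_i=g_i\tau_1g_i^{-1}$ reduces to these two cases.

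There is essentially no serious obstacle here. The only genuinely structural input is the reduction of the higher singular generators $\tau_i$ to words in $\tau_1$ and the $\sigma_j^{\pm 1}$, and this is already available from Section 2; the remainder is bookkeeping. The one point that needs a little care is allowing trivial factors — an empty leading or trailing $B_n$-element (handled by permitting $u_i=e$) and the insertion of $\tau_1^{0}$ (handled by permitting $r_i=0$) — so that the stated normal form covers words that happen to lie in $B_n$ or that begin or end with $\tau_1$.
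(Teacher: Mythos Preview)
Your proof is correct and follows essentially the same approach as the paper: both reduce to a generating set for $SM_n$ involving only the single singular generator $\tau_1$ (the paper via the set $\{\sigma_1^{\pm1},x^{\pm1},\tau_1\}$ from Section~2, you via the explicit conjugation $\tau_i=g_i\tau_1 g_i^{-1}$), and then group the resulting word into alternating $\tau_1$-powers and $B_n$-elements. The only difference is how a leading $B_n$-block is handled: you simply prepend $\tau_1^{0}$, which is immediate since $r_i\in\mathbb{N}$ is permitted; the paper instead invokes Lemma~\ref{lemm8} to conjugate $w$ by the leading braid word, a step that tacitly assumes $w\in\ker(\Phi_{a,b,c})$ and yields the element $u^{-1}wu$ rather than $w$ itself. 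Your direct bookkeeping avoids that detour and establishes the lemma in the full generality in which it is stated.
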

\begin{proof}
Let $w \in SM_n=<\sigma_1^{\pm 1}, x^{\pm 1}, \tau_1>$ be nontrivial. If $w$ starts with $\tau_1$, then the proof is straightforward. Suppose that $w$ starts with some $u \in B_n$, then by Lemma \ref{lemm8}, $u^{-1}wu$ is an element in the kernel which starts with $\tau_1$, and so the proof is completed.
\end{proof}

\vspace*{0.2cm}

Now, we find the shape of the possible elements in $\ker(\Phi_{a,b,c})$ in the case $\ker({\Phi_{a,b,c}|}_{SM_2})$ is nontrivial.

\vspace*{0.2cm}

\begin{theorem} \label{Thm10}
Let $G_n$ be a group and let $\rho: B_n \rightarrow G_n$ be a representation. Let $\mathbb{K}$ be a field with $a,b,c \in \mathbb{K}$ and let $\Phi_{a,b,c}: SM_n\rightarrow \mathbb{K}[G_n]$ be a representation of $SM_n$  defined by:
\begin{align*}
&\Phi_{a,b,c}(\sigma_i^{\pm 1})=\rho(\sigma_i^{\pm 1}) \hspace{0.2cm} \text{and} \hspace{0.2cm} \Phi_{a,b,c}(\tau_i)=a\rho(\sigma_i)+b\rho(\sigma_i^{-1})+ce,\hspace{0.2cm} i=1,2,\ldots,n-1.
\end{align*}
If $\ker({\Phi_{a,b,c}|}_{SM_2})$ is nontrivial, then the possible elements in $\ker(\Phi_{a,b,c})$ are of the form: $w=\tau_1^{r_1}v^{m_1}u_1\tau_1^{r_2}v^{m_2}u_2\ldots \tau_1^{r_k}v^{m_k}u_k$, where $v=\tau_1^p\sigma_1^q \in \ker({\Phi_{a,b,c}}_{|_{SM_2}})$ with $p$ minimal, $u_i \in B_n$, $m_i \geq 0$ and $0\leq r_i< p$, for all $1\leq i \leq k$. Moreover, the word $w=\tau_1^{r_1}u_1\tau_1^{r_2}u_2\ldots \tau_1^{r_k}u_k$ is also in $\ker(\Phi_{a,b,c})$.
\end{theorem}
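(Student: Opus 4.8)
The plan is to combine Lemma \ref{lemm9}, which writes an arbitrary word $w \in SM_n$ in the normal form $w = \tau_1^{r_1}u_1\tau_1^{r_2}u_2\cdots \tau_1^{r_k}u_k$ with $r_i \in \mathbb{N}$ and $u_i \in B_n$, with the structure of $\ker({\Phi_{a,b,c}|}_{SM_2})$ coming from Theorem \ref{Thm5}, namely $\ker({\Phi_{a,b,c}|}_{SM_2}) = \langle v \rangle$ with $v = \tau_1^p\sigma_1^q$ and $p$ minimal. First I would take $w \in \ker(\Phi_{a,b,c})$ and apply Lemma \ref{lemm9} to put it in the form above. For each exponent $r_i$, perform division with remainder by $p$: write $r_i = m_i p + r_i'$ with $0 \le r_i' < p$ and $m_i \ge 0$. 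The key manipulation is that $\tau_1$ commutes with $\sigma_1$ (relation (5) with $n=2$, i.e. inside the sub-monoid $SM_2$), so $\tau_1^{r_i} = \tau_1^{r_i'}\tau_1^{m_i p} = \tau_1^{r_i'}(\tau_1^p\sigma_1^q)^{m_i}\sigma_1^{-m_i q} = \tau_1^{r_i'}v^{m_i}\sigma_1^{-m_i q}$, since $\tau_1^p$ and $\sigma_1^q$ commute and hence $(\tau_1^p\sigma_1^q)^{m_i} = \tau_1^{m_i p}\sigma_1^{m_i q}$. Substituting this into each block of $w$ and absorbing the factor $\sigma_1^{-m_i q} \in B_n$ into the adjacent $u_i$ (after possibly relabelling), we obtain $w = \tau_1^{r_1'}v^{m_1}u_1'\tau_1^{r_2'}v^{m_2}u_2'\cdots \tau_1^{r_k'}v^{m_k}u_k'$ with $0 \le r_i' < p$, which is exactly the claimed shape.

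For the "moreover" part, observe that $v \in \ker(\Phi_{a,b,c})$: indeed $v \in \ker({\Phi_{a,b,c}|}_{SM_2}) \subseteq \ker(\Phi_{a,b,c})$ since ${\Phi_{a,b,c}|}_{SM_2}$ is literally the restriction of $\Phi_{a,b,c}$. Therefore $\Phi_{a,b,c}(v^{m_i}) = e$ for each $i$, so applying the homomorphism $\Phi_{a,b,c}$ to the expression $w = \tau_1^{r_1}v^{m_1}u_1\cdots \tau_1^{r_k}v^{m_k}u_k$ and deleting the $v^{m_i}$ factors shows $\Phi_{a,b,c}(w) = \Phi_{a,b,c}(\tau_1^{r_1}u_1\tau_1^{r_2}u_2\cdots\tau_1^{r_k}u_k)$; hence the latter word lies in $\ker(\Phi_{a,b,c})$ whenever $w$ does. (Here the $r_i$ may be taken to be the reduced $r_i'$ by the first part, but the identity works with the original $r_i$ as well.)

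The main obstacle I anticipate is bookkeeping rather than conceptual: one must be careful that pulling the $B_n$-factors $\sigma_1^{-m_i q}$ out past the following block $\tau_1^{r_{i+1}}\cdots$ is not legitimate in general (these do not commute once strands other than the first are involved, for $n \ge 3$), so the cleanest route is to absorb $\sigma_1^{-m_i q}$ into $u_i$ \emph{on the right of} the $i$-th block, keeping the $v^{m_i}$ to the left of $u_i$, and not attempt to collect all the $v$'s together. A second point needing care is that the $r_i$ arising from Lemma \ref{lemm9} are only guaranteed nonnegative, not less than $p$; the division-with-remainder step is precisely what upgrades them to $0 \le r_i' < p$, and one should check that when $r_i' = 0$ or $m_i = 0$ the formula still reads correctly (the corresponding factor is just the identity). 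No deeper input — in particular, no faithfulness of $\rho$ — is needed here; this is a purely combinatorial consequence of the normal form and the commuting relation $\tau_1\sigma_1 = \sigma_1\tau_1$.
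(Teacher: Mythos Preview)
Your proposal is correct and follows essentially the same route as the paper: write $w$ via Lemma~\ref{lemm9}, perform Euclidean division of each $\tau_1$-exponent by $p$, use $\tau_1\sigma_1=\sigma_1\tau_1$ to factor out $v^{m_i}=(\tau_1^p\sigma_1^q)^{m_i}$, and absorb the leftover $\sigma_1^{-m_iq}$ into $u_i$. You even supply the argument for the ``moreover'' clause (deleting the $v^{m_i}$ since $\Phi_{a,b,c}(v)=e$), which the paper's proof states but does not actually write out.
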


\begin{proof}
Suppose that $\ker({\Phi_{a,b,c}|}_{SM_2})$ is nontrivial. Then, by Theorem \ref{Thm5}, $\ker({\Phi_{a,b,c}|}_{SM_2})= <\tau_1^p\sigma_1^q>$ for some $p \in \mathbb{N}^*$ and $q\in \mathbb{Z}$, where $p$ is minimal. Let $w \in \ker(\Phi_{a,b,c})$, then by Lemma \ref{lemm9}, $w$ can be written as $w=\tau_1^{s_1}u_1\tau_1^{s_2}u_2\ldots \tau_1^{s_k}u_k$, where $s_i\in \mathbb{N}$ and $u_i \in B_n$ for all $1\leq i\leq k.$ If $s_1\geq p$, then there exists $m_1\in \mathbb{N}$ such that $s_1=m_1p+(s_1-m_1p)$, where $0\leq s_1-m_1p<p$, and so we see that $w=\tau_1^{s_1}u_1\tau_1^{s_2}u_2\ldots \tau_1^{s_k}u_k=\tau_1^{s_1-m_1p}\tau_1^{m_1p}\sigma_1^{m_1q}\sigma_1^{-m_1q}u_1\tau_1^{s_2}u_2\ldots \tau_1^{s_k}u_k=\tau_1^{s_1-m_1p}v^m\sigma_1^{-m_1q}u_1\tau_1^{s_2}u_2\ldots \tau_1^{s_k}u_k=\tau_1^{r_1}v^mu'_1\tau_1^{s_2}u_2\ldots \tau_1^{s_k}u_k$, where $r_1=s_1-m_1p<p$ and $u_1'=\sigma_1^{-m_1q}u_1\in B_n$. We do the same for all $s_i$, $2\leq i\leq k$, and we get the required result.
\end{proof}

\vspace*{0.2cm}

\begin{question}
To continue the result of Theorem \ref{Thm10}, we may ask many questions.
\begin{itemize}
\item[(1)] can we eliminate some elements from $\ker({\Phi_{a,b,c}})$ in the case $\ker({\Phi_{a,b,c}|}_{SM_2})$ is nontrivial?
\item[(2)] What is the shape of possible elements in $\ker(\Phi_{a,b,c})$ if $\ker({\Phi_{a,b,c}|}_{SM_2})$ is trivial?
\end{itemize}
\end{question}

\medskip

\section{Conclusion} 
In this paper, we studied the faithfulness of a family of representations ${\Phi_{a,b,c}}$ of the singular braid monoid $SM_n$. The first main important result is that we answered the question of Bardakov, Chbili, and Kozlovskaya on the faithfulness of ${\Phi_{a,b,c}}$ when two of the parameters $a,b$ and $c$ are zeros. On the other hand, the second main important result is that we found the nature of $\ker(\Phi_{a,b,c})$ if $\Phi_{a,b,c}$ is unfaithful and its kernel is nontrivial in the case $n=2$. Moreover, we made a relation between the faithfulness of ${\Phi_{a,b,c}}$ in the case $n>2$ and the case $n=2$. That is, we found the shape of the possible elements in $\ker(\Phi_{a,b,c})$ for all $n>2$ if the kernel of the restriction of $\Phi_{a,b,c}$ to $SM_2$ is nontrivial. There are a lot of issues to continue as a future work as we mentioned in many questions in Section 3.

\vspace{0.2cm}

\textbf{Conflict of Interest.} The author declare no conflict of interest.


\end{document}